\documentclass[12pt]{article}
\ifdefined\XeTeXrevision\else\ifdefined\pdfoutput\pdfoutput=1\fi\fi

\usepackage[english]{babel}
\usepackage[T1]{fontenc}
\usepackage{lmodern}
\usepackage{amsmath,amssymb,amsthm}
\usepackage[margin=1in]{geometry}
\usepackage{microtype}
\usepackage[hidelinks,hyperfootnotes=false]{hyperref}

\allowdisplaybreaks

\newtheoremstyle{plainsl}
  {\topsep}
  {\topsep}
  {\slshape}
  {}
  {\normalfont\bfseries}
  {.}
  { }
  {}

\theoremstyle{plainsl}
\newtheorem{theorem}{Theorem}[section]
\newtheorem{lemma}[theorem]{Lemma}
\newtheorem{corollary}[theorem]{Corollary}
\newtheorem*{conjecture}{Conjecture}

\theoremstyle{remark}
\newtheorem*{remark}{Remark}

\title{A dense-case theorem for Seymour's second neighborhood conjecture}
\author{Jake Brukhman\\{\small CoinFund}\\{\small\texttt{jake@coinfund.io}}}
\date{9 August 2026}

\hypersetup{
  pdftitle={A dense-case theorem for Seymour's second neighborhood conjecture},
  pdfauthor={Jake Brukhman}
}

\begin{document}
\maketitle
{\let\thefootnote\relax
\footnotetext{2020 \emph{Mathematics Subject Classification}: 05C20.}}

\begin{abstract}
Seymour's second neighborhood conjecture asserts that every finite oriented
graph has a vertex with at least as many exact second outneighbors as
outneighbors.  Established cases include tournaments, proved by Fisher (1996),
and oriented graphs of minimum outdegree at most six, proved by Kaneko and
Locke (2001); a recent preprint of Sadhukhan, Sandeep, and Sen (2026) treats
minimum outdegree seven.  For dense incomplete graphs, Fidler and Yuster (2007)
proved the conjecture when the missing edges form a matching, a star, or a
clique, Ghazal (2012) extended this direction to generalized stars, and Dara,
Francis, Jacob, and Narayanan (2022) proved it when the missing edges can be
partitioned into a matching and a star.  We give a
short counting proof of the conjecture for every oriented graph of order
$n=2\delta+2$, where $\delta$ is the minimum outdegree, with no prescribed
structure on the missing edges.  Together with Fisher's tournament theorem,
this implies the conjecture for every oriented graph satisfying
$n\le2\delta+2$.  Combined with the known minimum-outdegree results, this
raises the best lower bound known to us on the order of a counterexample from
$16$ to $17$ and, conditional on the preprint of Sadhukhan, Sandeep, and Sen
(2026), from $18$ to $19$.
\end{abstract}

\section{Introduction}

A finite \emph{directed graph} is a pair
$G=(V(G),A(G))$, where $V(G)$ is a finite set of vertices and
$A(G)$ is a set of ordered pairs of distinct vertices, called \emph{arcs}.
We write
\[
 u\to v
\]
when $(u,v)\in A(G)$.  Thus the arrow points from the source of the arc to
its target.  The directed graph is \emph{oriented} if, for every two distinct
vertices $u,v$, at most one of $u\to v$ and $v\to u$ is present. For a vertex $v\in V(G)$, define its outdegree and indegree by
\[
 d^+(v)=|\{x\in V(G):v\to x\}|,
 \qquad
 d^-(v)=|\{x\in V(G):x\to v\}|.
\]
The minimum outdegree of $G$ is
$\delta=\min_{v\in V(G)}d^+(v).$ For a vertex $v\in V(G)$, define the \emph{outneighborhood} by
\[
 N_1(v)=\{x\in V(G):v\to x\},
\]
and the \emph{exact second outneighborhood} by
\[
 N_2(v)=
 \{x\in V(G)\setminus N_1(v):
   y\to x\text{ for some }y\in N_1(v)\}.
\]
By definition $N_2(v)$ is disjoint from $N_1(v)$; since $G$ is oriented, it also excludes $v$. Thus $N_2(v)$ consists exactly of the vertices reached from $v$ in two steps but not in one step.  When discussing reachability from $v$ to $x$, we call $v$ the \emph{source vertex} and $x$ the \emph{target vertex}. A vertex $v$ is a \emph{Seymour vertex} if $|N_2(v)|\ge|N_1(v)|.$

\begin{conjecture}[Seymour]
Every finite oriented graph has a Seymour vertex.
\end{conjecture}

Several results bear directly on dense or extremal cases.  Fidler and
Yuster~\cite{FidlerYuster2007} proved the conjecture for several prescribed
missing-edge structures, and Ghazal~\cite{Ghazal2012} generalized that line of
work.  Dara, Francis, Jacob, and
Narayanan~\cite{DaraFrancisJacobNarayanan2022} proved the conjecture when the
missing edges can be partitioned into a matching and a star.  Further
prescribed missing structures, including two stars and disjoint paths, were
treated by Daamouch, Ghazal, and
Al-Mniny~\cite{DaamouchGhazalAlMniny2024}, and Daamouch~\cite{Daamouch2021}
proved the conjecture for related digraph classes.  Espuny D\'iaz, Gir\~ao,
Granet, and Kronenberg~\cite{EspunyDiazEtAl2024} obtained reductions involving
minimum outdegree, and Guo, Kang, and Zwaneveld~\cite{GuoKangZwaneveld2026}
studied Seymour-tight orientations and counterexamples close to regular
tournaments.

Let $G$ be an oriented graph on $n$ vertices with minimum outdegree $\delta$. Counting arcs by their sources gives
\[
 n\delta\le|A(G)|\le\binom n2,
\]
so $n\ge2\delta+1$. Equality forces every pair of vertices to be adjacent and every outdegree to equal $\delta$, so the graph is a regular tournament, which satisfies the conjecture by Fisher's theorem~\cite{Fisher1996}.

\begin{theorem}[Dense-case theorem]\label{thm:fixed-target}
Let $G$ be an oriented graph on $n$ vertices with minimum outdegree $\delta$.
If
\[
 n=2\delta+2,
\]
then $G$ has a Seymour vertex.
\end{theorem}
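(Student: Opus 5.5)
The plan is a double-counting argument over pairs (source, target), organised by fixing the target. This is suggested by the label of Theorem~\ref{thm:fixed-target}. First I record the rigidity forced by $n=2\delta+2$. Write $e(v)=d^+(v)-\delta\ge0$ for the excess of $v$, and let $m$ be the number of non-adjacent pairs. Counting arcs by their sources gives
\[
 \sum_v e(v)+m=\binom{n}{2}-n\delta=(\delta+1)(2\delta+1)-2\delta(\delta+1)=\delta+1 .
\]
So the total of excesses and missing pairs is only $\delta+1$, which is at most half a vertex's worth per vertex. For a vertex $x$ let $M(x)$ be its set of non-neighbours and $m(x)=|M(x)|$. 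Then $\sum_x m(x)=2m$, and also $d^-(x)=n-1-m(x)-d^+(x)=\delta+1-m(x)-e(x)$.

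Suppose for contradiction that no vertex is a Seymour vertex. For each $v$, let $R(v)$ be the set of vertices outside $\{v\}\cup N_1(v)\cup N_2(v)$. Then $|R(v)|\ge (n-1-d^+(v))-(d^+(v)-1)=2\delta+2-2d^+(v)=2-2e(v)$. In particular, every vertex of outdegree exactly $\delta$ has at least two ``unreached'' targets. The defining property of $x\in R(v)$ is that no vertex of $N_1(v)$ sends an arc to $x$, and $v\not\to x$. Equivalently,
\[
 N_1(v)\subseteq N_1(x)\cup M(x)\setminus\{v\}, \qquad v\in N_1(x)\cup M(x).
\]
Hence every source $v$ charged to a fixed target $x$ lives inside the set $S_x=N_1(x)\cup M(x)$, of size $\delta+e(x)+m(x)$. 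Moreover $v$ sends almost all of its at least $\delta$ out-arcs inside $S_x$.

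The key step is to bound, for a fixed target $x$, the number $t(x)$ of sources $v\in S_x$ whose whole outneighbourhood lies in $S_x\setminus\{v\}$. Let $T\subseteq S_x$ be this set, with $|T|=t$. Each $v\in T$ has at least $\delta$ out-arcs into $S_x$. Arcs inside $T$ number at most $\binom t2$, and arcs from $T$ to $S_x\setminus T$ number at most $t(|S_x|-t)$. This gives
\[
 t\delta\le\binom t2+t(\delta+e(x)+m(x)-t),
\qquad\text{so}\qquad t\le 2\bigl(e(x)+m(x)\bigr)-1
\]
roughly, once we also subtract the missing pairs inside $S_x$. So targets with no excess and no missing edges receive no charge at all. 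Summing over $x$ gives $\sum_x t(x)\le 2\sum_x e(x)+4m-(\text{number of charged targets})$. On the other side, $\sum_x t(x)=\sum_v|R(v)|\ge\sum_v(2-2e(v))$. Positivity must be handled honestly here, since $2-2e(v)$ is negative for high-excess $v$. I would restrict to the set $L$ of vertices with $e(v)=0$ and use the total budget $\delta+1$ to show that $|L|$ is at least $n-(\delta+1)=\delta+1$. This gives $\sum_x t(x)\ge2|L|\ge 2\delta+2$, while the upper bound is at most about $4(\delta+1)-(\ldots)$.

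The main obstacle is exactly this final comparison. The naive constants ($2$ versus $4$ per unit of budget) do not close, so the inner bound on $t(x)$ must be sharpened. I would first try to exploit that a missing pair $\{x,y\}$ contributes to both $m(x)$ and $m(y)$, while a single missing pair can lie inside $S_x$ only at the cost of reducing the available arcs there. That cost should recover a factor of two. If it does not, I would refine by splitting targets according to whether $x\to v$ or $v$ is a non-neighbour of $x$. The first case forces $v\in N_1(x)$ and gives an extra constraint $d^+(v)\le d^+(x)+m(x)-1$. I expect the extremal configurations (near-regular tournaments minus a perfect-ish matching) to show exactly which inequality must be tight, and I would use them to calibrate the weights before finalising the count.
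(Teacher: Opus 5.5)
\textbf{There is a gap, but only at the very end.} Your setup is the paper's argument. Your $S_x=N_1(x)\cup M(x)$ is the trap $T(x)$ of vertices not sending an arc to $x$. Your set of sources charged to $x$ is exactly $P(x)=\{v: x\in U(v)\}$. Your inequality $t(x)\le 2\bigl(e(x)+m(x)\bigr)-1$ for $t(x)\ge1$ is the paper's capacity lemma with $q(x)=e(x)+m(x)$. It holds exactly as you derived it; no correction for missing pairs inside $S_x$ is needed, so "roughly" can be dropped.

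The problem is the final step. You declare that the comparison does not close and propose sharpening the per-target bound. That diagnosis is wrong: the loss comes entirely from replacing $2|L|$ by $2(\delta+1)$. You used the budget $\sum_v e(v)\le\delta+1$ only to get $|L|\ge\delta+1$. But every vertex outside $L$ has excess at least $1$, so
\[
|L|\ge n-\sum_v e(v),
\]
which is far stronger when the excess is small. In the regular case, for instance, $|L|=n=2\delta+2$ while $m=\delta+1$, so the lower bound $2|L|=4(\delta+1)$ already beats your upper bound $4(\delta+1)-c$.

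Combine this with your own identity $\sum_v e(v)+m=\delta+1=n/2$. Then
\[
2\sum_x e(x)+4m=2n-2\sum_v e(v)\le 2|L|.
\]
Let $c$ be the number of targets with $t(x)\ge1$. Since $\sum_x t(x)\ge 2|L|>0$, we have $c\ge1$, and therefore
\[
2|L|\le\sum_v|R(v)|=\sum_x t(x)\le 2\sum_x e(x)+4m-c<2|L|,
\]
a contradiction. This is precisely the paper's inequality $Q=\sum_x q(x)\le\ell$ combined with $I<2Q$ and $I\ge2\ell$.

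So no refinement of the inner bound, no splitting by $x\to v$ versus non-adjacency, and no calibration against extremal configurations is required. As written, however, your proposal does not prove the theorem: it abandons the correct count at the last inequality, and the repairs it proposes instead are speculative.
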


At $n=2\delta+2$, up to $n/2$ edges may be missing and no structure is
imposed on them, in contrast
to~\cite{Daamouch2021,DaamouchGhazalAlMniny2024,DaraFrancisJacobNarayanan2022,FidlerYuster2007,Ghazal2012};
conversely, those results impose no lower bound on the minimum outdegree, so
neither result contains the other.

Corollary~\ref{cor:dense-case} shows that any counterexample to the
conjecture must satisfy $n\ge2\delta+3$: no counterexample can occur at the
two densest orders an oriented graph of minimum outdegree $\delta$ can have.
This complements the study of counterexamples close to regular tournaments
in~\cite{GuoKangZwaneveld2026}.

\begin{corollary}\label{cor:dense-case}
Let $G$ be an oriented graph on $n$ vertices with minimum outdegree $\delta$.
If $n\le2\delta+2$, then $G$ has a Seymour vertex.
\end{corollary}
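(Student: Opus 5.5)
The corollary follows by splitting into the three possible orders, since the counting argument in the introduction already gives $n\ge 2\delta+1$.

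First, I will record the lower bound. Counting arcs by their sources gives $n\delta\le|A(G)|\le\binom n2$, so $\delta\le (n-1)/2$, that is, $n\ge 2\delta+1$. Under the hypothesis $n\le 2\delta+2$, this leaves exactly two values, $n=2\delta+1$ and $n=2\delta+2$. The degenerate case $n=1$ (so $\delta=0$) is covered automatically: a single vertex has $|N_1|=0\le|N_2|$, and more generally any vertex of outdegree $0$ is trivially a Seymour vertex.

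\textbf{Case $n=2\delta+1$.} Here $n\delta=\binom n2$, so both inequalities in the count are equalities. The equality $|A(G)|=\binom n2$ means every pair of distinct vertices is joined by exactly one arc, so $G$ is a tournament. The equality $\sum_v d^+(v)=n\delta$ with $d^+(v)\ge\delta$ forces every outdegree to equal $\delta$, so $G$ is a regular tournament. Fisher's theorem~\cite{Fisher1996} then gives a Seymour vertex. For a regular tournament one can also check this directly: the in-neighbors of $v$ number $\delta$, and each of them lies in $N_2(v)$ because some outneighbor of $v$ beats it. Otherwise that in-neighbor would beat all of $N_1(v)\cup\{v\}$, giving it outdegree at least $\delta+1$. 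I will nonetheless simply cite Fisher.

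\textbf{Case $n=2\delta+2$.} This is exactly Theorem~\ref{thm:fixed-target}, which I take as given.

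No step here is a real obstacle; all the substance sits in Theorem~\ref{thm:fixed-target}. The only points needing care are the tightness argument in the case $n=2\delta+1$ and stating the trivial small cases cleanly.
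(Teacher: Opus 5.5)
Your proposal is correct and matches the paper's proof. The arc count gives $n\ge2\delta+1$, the case $n=2\delta+1$ forces a regular tournament handled by Fisher's theorem, and the case $n=2\delta+2$ is Theorem~\ref{thm:fixed-target}; your direct check that every in-neighbor of $v$ lies in $N_2(v)$ is also valid and would make the appeal to Fisher unnecessary for regular tournaments.
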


\begin{proof}
Every oriented graph satisfies $n\ge2\delta+1$.  If $n=2\delta+1$, then $G$
is a regular tournament as observed above, and Fisher's
theorem~\cite{Fisher1996} applies.  If $n=2\delta+2$, apply
Theorem~\ref{thm:fixed-target}.
\end{proof}

\begin{corollary}[Minimum counterexample order]\label{cor:counterexample-order}
Every counterexample to Seymour's conjecture has at least $17$ vertices.
Assuming the result of Sadhukhan et al.~\cite{SadhukhanSandeepSen2026}, every
counterexample has at least $19$ vertices.
\end{corollary}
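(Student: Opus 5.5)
The plan is to combine the two known minimum-outdegree results with Corollary~\ref{cor:dense-case}. First I bound the minimum outdegree of a counterexample from below. Then I turn that into a bound on its order using the density restriction.

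Let $G$ be a counterexample, that is, a finite oriented graph with no Seymour vertex, on $n$ vertices and with minimum outdegree $\delta$.

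\textbf{Step 1: lower bound on $\delta$.} If $\delta=0$, a vertex $v$ with $d^+(v)=0$ satisfies $|N_2(v)|\ge 0=|N_1(v)|$, so it is a Seymour vertex. For $1\le\delta\le 6$, the theorem of Kaneko and Locke cited in the introduction gives a Seymour vertex. Hence every counterexample has $\delta\ge 7$. If we also assume the result of Sadhukhan, Sandeep, and Sen~\cite{SadhukhanSandeepSen2026}, the case $\delta=7$ is excluded as well, so $\delta\ge 8$.

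\textbf{Step 2: lower bound on $n$.} By Corollary~\ref{cor:dense-case}, a counterexample cannot satisfy $n\le 2\delta+2$. Since $n$ is an integer, this means $n\ge 2\delta+3$. With $\delta\ge 7$ this gives $n\ge 17$. Under the conditional bound $\delta\ge 8$ it gives $n\ge 19$.

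There is no real obstacle here. The only points needing care are handling $\delta=0$ explicitly, since the cited minimum-outdegree results may be stated only for positive $\delta$, and checking that Kaneko--Locke indeed covers every value $\delta\le 6$ (and Sadhukhan et al.\ the value $\delta=7$) with no additional hypotheses. All of the substantive work sits in Theorem~\ref{thm:fixed-target}, which is used here only through Corollary~\ref{cor:dense-case}.
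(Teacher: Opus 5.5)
Your proposal is correct and follows the paper's proof: Kaneko--Locke (and, conditionally, Sadhukhan--Sandeep--Sen) gives $\delta\ge7$ (resp.\ $\delta\ge8$), and Corollary~\ref{cor:dense-case} then gives $n\ge2\delta+3$. Your separate treatment of $\delta=0$ is a harmless extra check that the paper leaves implicit.
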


\begin{proof}
By Kaneko and Locke's theorem~\cite{KanekoLocke2001}, a counterexample has
minimum outdegree $\delta\ge7$.  Corollary~\ref{cor:dense-case} then gives
$n\ge2\delta+3\ge17$.  Under the result of Sadhukhan, Sandeep, and Sen, the
case $\delta=7$ is also excluded, so $\delta\ge8$ and hence $n\ge19$.
\end{proof}

Without Theorem~\ref{thm:fixed-target}, the same reasoning gives only
$n\ge2\delta+2$, since Fisher's theorem excludes just the order $n=2\delta+1$;
that is, $n\ge16$ unconditionally and $n\ge18$ conditionally.  To our
knowledge, no stronger order bounds were previously known.

\section{The fixed-target argument}

For a source vertex $v$, define
\[
 U(v)=V(G)\setminus\bigl(\{v\}\cup N_1(v)\cup N_2(v)\bigr).
\]
Thus $U(v)$ is the set of target vertices that the source $v$ cannot reach in
at most two steps. Now fix a target vertex $x$.  Define the \emph{trap} of $x$ by
\[
 T(x)=\{y\in V(G)\setminus\{x\}:y\nrightarrow x\}.
\]
These are exactly the vertices that do not send an arc to $x$.  Because $G$
is oriented, every outneighbor of $x$ belongs to $T(x)$.  Hence
\[
 |T(x)|\ge d^+(x)\ge\delta.
\]
Define the \emph{target slack}
\[
 q(x)=|T(x)|-\delta\ge0.
\]
Now define
\[
 P(x)=\{v\in V(G):x\in U(v)\}.
\]
Thus $P(x)$ is the set of source vertices from which the fixed target $x$
cannot be reached in at most two steps.  If $v\in P(x)$, then $v\nrightarrow
x$.  Moreover, no vertex of $N_1(v)$ points to $x$, since otherwise there
would be a two-step path from $v$ to $x$.  Therefore
\begin{equation}\label{eq:trap}
 \{v\}\cup N_1(v)\subseteq T(x).
\end{equation}
This is the fixed-target observation: every source $v$ that cannot reach $x$ in 2 steps, together with
all of its outneighbors, is confined to the same trap $T(x)$.

\begin{lemma}[Fixed-target capacity]\label{lem:capacity}
Let $x$ be a target vertex.  If $P(x)\ne\varnothing$, then
\[
 q(x)\ge1
 \qquad\text{and}\qquad
 |P(x)|\le2q(x)-1.
\]
In particular, if $q(x)=0$, then $P(x)=\varnothing$.
\end{lemma}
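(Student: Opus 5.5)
The plan is to make the fixed-target observation~\eqref{eq:trap} quantitative by counting arcs inside the trap $T(x)$. Write $t=|T(x)|=\delta+q(x)$ and $p=|P(x)|$.

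\emph{First claim.} Suppose $P(x)\ne\varnothing$ and pick $v\in P(x)$. By~\eqref{eq:trap}, the set $\{v\}\cup N_1(v)$ lies in $T(x)$. This set has $1+d^+(v)\ge\delta+1$ elements, since $v\notin N_1(v)$. Hence $t\ge\delta+1$, that is, $q(x)\ge1$. The ``in particular'' clause is the contrapositive.

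\emph{Second claim.} By~\eqref{eq:trap}, every $v\in P(x)$ lies in $T(x)$, so $P(x)\subseteq T(x)$. Moreover, every outneighbor of such a $v$ lies in $T(x)$. Let $m$ be the number of arcs of $G$ whose source is in $P(x)$. Every such arc has its target in $T(x)$, and each $v\in P(x)$ contributes $d^+(v)\ge\delta$ of them, so
\[
 m\ge p\delta .
\]
For an upper bound on $m$, split these arcs by where the target lies.
\begin{itemize}
\item Arcs with both ends in $P(x)$: since $G$ is oriented, each pair of vertices carries at most one arc, so there are at most $\binom p2$ of them.
\item Arcs from $P(x)$ to $T(x)\setminus P(x)$: there are at most $p(t-p)$ of them.
\end{itemize}
Combining the two bounds gives
\[
 p\delta\le\binom p2+p(t-p).
\]

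\emph{Conclusion.} Dividing by $p>0$ gives $\delta\le\frac{p-1}{2}+t-p=t-\frac{p+1}{2}$. Substituting $t=\delta+q(x)$ yields $\frac{p+1}{2}\le q(x)$, so $p\le 2q(x)-1$, as required.

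The only step needing care is the upper bound on arcs inside $P(x)$. It uses that $G$ is oriented, so that each pair contributes at most one arc. It also uses the containment $P(x)\subseteq T(x)$, which holds because $v\in P(x)$ implies $v\nrightarrow x$ and $v\ne x$. Beyond these two points the argument is a direct count, and I do not expect a real obstacle.
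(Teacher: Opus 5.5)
Your proposal is correct and follows the paper's proof essentially verbatim: the paper also counts the at least $p\delta$ arcs leaving $P(x)$, bounds those inside $P(x)$ by $\binom{p}{2}$ using orientation and those into $T(x)\setminus P(x)$ by $p(|T(x)|-p)$, and divides by $p$ to get $p\le 2q(x)-1$. The only cosmetic difference is that you obtain $q(x)\ge1$ directly from $|\{v\}\cup N_1(v)|\ge\delta+1$ for a single $v\in P(x)$, whereas the paper reads it off from $1\le p\le 2q(x)-1$.
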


\begin{proof}
Suppose $P(x)\ne\varnothing$, and set $p:=|P(x)|\ge1$.
By \eqref{eq:trap}, $P(x)\subseteq T(x)$, and every arc whose source lies in
$P(x)$ has its target in $T(x)$.
There are at least $\delta p$ such arcs.  At most
$\binom{p}{2}$ have both endpoints in $P(x)$, because $G$ is oriented, and at
most $p\bigl(|T(x)|-p\bigr)$ go from $P(x)$ to the rest of the trap.
Consequently,
\[
 \delta p
 \le \binom{p}{2}
    +p\bigl(\delta+q(x)-p\bigr).
\]
Dividing by $p$ and rearranging gives $p\le2q(x)-1$.  Since $p\ge1$, this
forces $q(x)\ge1$.  The final assertion is the contrapositive.
\end{proof}

\begin{proof}[Proof of Theorem~\ref{thm:fixed-target}]
Suppose for contradiction that $G$ has no Seymour vertex.  Then $\delta>0$,
since a vertex of outdegree zero is automatically a Seymour vertex.  Let
\[
 L=\{v\in V(G):d^+(v)=\delta\},
 \qquad \ell=|L|\ge1.
\]
Every $v\in L$ is not a Seymour vertex, so $|N_2(v)|\le\delta-1$.
Since $n=2\delta+2$, it follows that
\begin{equation}\label{eq:demand}
 |U(v)|=n-1-\delta-|N_2(v)|\ge2.
\end{equation}
Thus each minimum-outdegree source contributes at least two unreachable
targets. Let
\[
 I=\bigl|\{(v,x)\in V(G)^2:x\in U(v)\}\bigr|.
\]
Counting these ordered source--target pairs first by their source and then by
their target gives
\begin{equation}\label{eq:doublecount}
 I=\sum_{v\in V(G)}|U(v)|
 =\sum_{x\in V(G)}|P(x)|
\end{equation}
since $x\in U(v)$ and $v\in P(x)$ describe the
same ordered pair $(v,x)$.  From \eqref{eq:demand},
\begin{equation}\label{eq:lower}
 I\ge2\ell>0.
\end{equation}

\noindent Put
\[
 Q=\sum_{x\in V(G)}q(x).
\]
Since $I>0$, at least one set $P(x)$ is nonempty; Lemma~\ref{lem:capacity}
then implies that at least one $q(x)$ is positive.  Applying the lemma
in \eqref{eq:doublecount} gives
\begin{equation}\label{eq:capacity}
 I\le\sum_{q(x)>0}\bigl(2q(x)-1\bigr)
   <2\sum_xq(x)=2Q.
\end{equation}
It remains to compare $Q$ with $\ell$.  Since
$|T(x)|=n-1-d^-(x)$,
\[
 q(x)=n-1-\delta-d^-(x).
\]
Total indegree equals total outdegree, and every vertex outside $L$ has
outdegree at least $\delta+1$.  Therefore
\begin{equation}\label{eq:ledger}
\begin{aligned}
 Q
 &=n(n-1-\delta)-\sum_xd^+(x)\\
 &\le n(n-1-\delta)
   -\bigl(\ell\delta+(n-\ell)(\delta+1)\bigr)\\
 &=\ell,
\end{aligned}
\end{equation}
where the last equality uses $n=2\delta+2$.
Now \eqref{eq:lower}--\eqref{eq:ledger} give
\[
 I<2Q\le2\ell\le I,
\]
a contradiction.  Hence $G$ has a Seymour vertex.
\end{proof}

\begin{remark}
The order $n=2\delta+2$ also represents the limitation of this argument.  For
$n=2\delta+3$, the same computation as in \eqref{eq:ledger} gives only
$Q\le\ell+n$, while \eqref{eq:demand} gives $I\ge3\ell$; since $\ell\le n$,
the comparison $I<2Q\le2\ell+2n$ no longer yields a contradiction.  Extending
the method past $n=2\delta+2$ therefore requires new ideas.
\end{remark}

\section*{Acknowledgments}

The author initiated and directed the investigation, curated intermediate
results, selected the theorem for publication, and edited the final statement
and exposition.  OpenAI language models (GPT-5 family) carried out the
detailed mathematical exploration, implemented counterexample searches and
verification tools, discovered the fixed-target capacity argument and its
double-counting proof, and drafted the manuscript; Anthropic Claude models
performed an adversarial audit of an intermediate draft and assisted with
revisions.  The author verified
the proofs and accepts sole responsibility for the final manuscript and its
claims.

\end{document}